\newtheorem{theorem}{Theorem}[section]
\newtheorem{proposition}[theorem]{Proposition}
\newtheorem{lemma}[theorem]{Lemma}
\newtheorem{corollary}[theorem]{Corollary}
\def\R{{\rm I\mskip -3.5mu R}}
\def\e{\varepsilon}
\def\di12{\mathcal{D}^{1,2}(\R^n)}
\def\d{\delta}
\def\D{\Delta}
\def\l{{\lambda}}
\def\0l{_{0,\l}}
\def\1l{_{1,\l}}
\def\2l{_{2,\l}}
\def\3l{_{3,\l}}
\def\4l{_{4,\l}}
\def\Om{\Omega}
\begin{document}
\title[On the Green function of the annulus]
 {On the Green function of the annulus}
 \thanks{The first author is supported by PRIN-2009-WRJ3W7 grant and
the second author is supported by  Basileus scholarship programme.}

\author[Grossi]{Massimo Grossi}
\address{Massimo Grossi, Dipartimento di Matematica, Universit\`a di Roma
La Sapienza, P.le A. Moro 2 - 00185 Roma- Italy.}
\email{massimo.grossi@uniroma1.it}
\author[Vujadinovi\'c]{Djordjije Vujadinovi\'c}
\address{Djordjije Vujadinovi\'c, Faculty of Mathematics, University of Montenegro, Dzordza Vaˇsingtona bb, 81000
Podgorica, Montenegro}
\email{djordjijevuj@t-com.me}

\begin{abstract}
Using the Gegenbauer polynomials and the zonal harmonics functions we give some representation formula of the Green function in the annulus. We apply this result to prove some uniqueness results for some nonlinear elliptic problems.
\end{abstract}
\maketitle
\section{Introduction and statement of the main results}
The classical Green function of the operator $-\D$ with Dirichlet boundary conditions is defined by
\begin{equation}
\left\{\begin{array}{rr}
-\D_x G(x,y)=\delta_y(x)&\text{in}\ \Om\\
G(x,y)=0&\text{on}\ \partial\Om,
                     \end{array}
                     \right. 
\end{equation}
where $\delta_y$ is the Dirac function centered at $y$ and $\Om$ is a bounded domain of $\R^n$, $n\ge2$.

It is well known that the Green function can be written as
\begin{equation}\label{i0}
G(x,y)=\frac{1}{(n-2)\omega_{n-1}|x-y|^{n-2}}+H(x,y)
\end{equation}
where $H(x,y)$  is a smooth function in $\Om\times\Om$ which is harmonic in both variables $x$ and $y$. Finally the Robin function is defined as
\begin{equation}
R(x)=H(x,x).
\end{equation}

The knowledge of the Green (or the Robin) function is of great importance in applications (we mention the paper \cite{BF} and the rich list of  references therein). Indeed the explicit calculation of the Green function is an old problem (see for example the book by Courant and Hilbert, \cite{CH}) but it can be solved only in special cases (like the ball or half-space). 

For these reason, even if it is not possible to have the explicit expression, it is very important to deduce any properties of the Green function.

In this paper we are interested to the case where the domain is the {\em annulus} in $\R^n$, namely $\Om=\left\{x\in\R^n:\ a<|x|<b\right\}$ (in the rest of the paper by simplicity we assume that $b=1$). Even if the annuls possesses many symmetries, you can not explicitly write the Green function. If $n=2$ in \cite{massimo} it was given a representation for the Green function as trigonometrical series. In this paper we give a representation formula of the Green function when $n\ge3$ using the {\em zonal spherical harmonics}. Our first result is the following,
\begin{theorem}\label{main}
Let $A$ be the annulus $A=\left\{x\in\R^n:\ a<|x|<1\right\}$. Then we have that the Green function in $A$ is given by,
\begin{equation}\label{b1}
\begin{split}
&G_{A}(x,y)=\frac{1}{(n-2)\omega_{n-1}|x-y|^{n-2}}-\\
&\frac{1}{\omega_{n-1}}\sum_{m=0}^{\infty}\frac{
|x|^{2m+n-2}|y|^{2m+n-2}-a^{2m+n-2}\left(|x|^{2m+n-2}+|y|^{2m+n-2}\right)+a^{2m+n-2}}{(2m+n-2)(|x||y|)^{n+m-2}(1-a^{2m+n-2})}Z_{m}\left(\frac{x}{|x|},\frac{y}{|y|}\right)
\end{split}
\end{equation}
Moreover the Robin function is given by, setting $d_0=1$ and $d_m=\begin{pmatrix}
n+m-2\\n-2
\end{pmatrix}-\begin{pmatrix}
n+m-3\\n-2,
\end{pmatrix}$, for $m\ge1$,
\begin{equation}\label{b2}
R_A(x)=-\frac{1}{\omega_{n-1}}\sum_{m=0}^{\infty}d_m\frac{a^{2m+n-2}-2a^{2m+n-2}|x|^{2m+n-2}+|x|^{4m+2n-4}}{(2m+n-2)|x|^{2m+2n-4}(1-a^{2m+n-2})}.
\end{equation}
\end{theorem}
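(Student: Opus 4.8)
The plan is to diagonalize the problem in the basis of zonal harmonics and reduce it to a one–parameter family of radial two–point boundary value problems. Writing $\rho=|x|$, $r=|y|$, $\xi=x/|x|$, $\eta=y/|y|$, and $\rho_<=\min(\rho,r)$, $\rho_>=\max(\rho,r)$, I would start from the zonal expansion of the fundamental solution,
\begin{equation*}
\frac{1}{(n-2)\omega_{n-1}|x-y|^{n-2}}=\frac{1}{\omega_{n-1}}\sum_{m=0}^{\infty}\frac{1}{2m+n-2}\,\frac{\rho_<^{\,m}}{\rho_>^{\,m+n-2}}\,Z_m(\xi,\eta),
\end{equation*}
which follows from the Gegenbauer generating function and the definition of $Z_m$ recalled in the previous section. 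Since each $Z_m(\cdot,\eta)$ is a spherical harmonic of degree $m$, the products $\rho^{m}Z_m$ and $\rho^{-(m+n-2)}Z_m$ are harmonic; I therefore look for $G_A$ in the separated form $G_A(x,y)=\frac{1}{\omega_{n-1}}\sum_m \frac{1}{2m+n-2}\,g_m(\rho,r)\,Z_m(\xi,\eta)$, so that the problem $-\D_x G_A=\delta_y$ in $A$, $G_A=0$ on $\de A$, decouples into, for every $m$, the construction of a radial Green's function $g_m$ on $[a,1]$ with a jump at $\rho=r$ and $g_m(a,r)=g_m(1,r)=0$.

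For fixed $m$ the radial operator is $-\tfrac{1}{\rho^{n-1}}(\rho^{n-1}g')'+\tfrac{m(m+n-2)}{\rho^{2}}g$, whose homogeneous solutions are $\rho^{m}$ and $\rho^{-(m+n-2)}$. I would take
\begin{equation*}
u_1(\rho)=\rho^{m}-a^{2m+n-2}\rho^{-(m+n-2)},\qquad u_2(\rho)=\rho^{m}-\rho^{-(m+n-2)},
\end{equation*}
so that $u_1(a)=0$ and $u_2(1)=0$, and set $g_m(\rho,r)=u_1(\rho_<)u_2(\rho_>)/c_m$. A direct computation gives the constant weighted Wronskian $\rho^{n-1}\bigl(u_1u_2'-u_1'u_2\bigr)=(2m+n-2)(1-a^{2m+n-2})$, which fixes $c_m$ through the jump condition coming from the delta source. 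Expanding the product $u_1(\rho_<)u_2(\rho_>)$ and isolating the term $\rho_<^{\,m}\rho_>^{-(m+n-2)}$ reproduces exactly the free–space kernel, while the three remaining terms collect into the correction
\begin{equation*}
-\,\frac{N_m}{(\rho r)^{m+n-2}\,(1-a^{2m+n-2})},\qquad N_m=(\rho r)^{2m+n-2}-a^{2m+n-2}\bigl(\rho^{2m+n-2}+r^{2m+n-2}\bigr)+a^{2m+n-2}.
\end{equation*}
Summing over $m$ and reinserting the expansion of the fundamental solution yields \eqref{b1}.

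Finally, subtracting the free–space singularity leaves the regular part $H_A$, and the Robin function is obtained by letting $y\to x$. On the diagonal $\rho=r=|x|$ the numerator collapses to $N_m=a^{2m+n-2}-2a^{2m+n-2}|x|^{2m+n-2}+|x|^{4m+2n-4}$ and the factor $(\rho r)^{m+n-2}$ becomes $|x|^{2m+2n-4}$, while the angular arguments coincide, so that $Z_m(\xi,\xi)=d_m$; this produces \eqref{b2}.

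The radial ODE and the Wronskian bookkeeping are routine; the delicate points are analytic rather than algebraic. The main obstacle is to justify that the formal series genuinely represents the Green function — that the mode–by–mode construction converges, that $\D_x$ may be interchanged with the summation off the diagonal, and above all that the series defining $R_A$ still converges after passing to the diagonal, where the gain from the ratio $\rho_</\rho_>$ is lost. I would control this through the polynomial growth $Z_m(\xi,\eta)=O(m^{n-2})$ of the zonal harmonics together with the geometric factors $|x|<1$ and $a<1$ appearing in $N_m/|x|^{2m+2n-4}$, which force uniform convergence on compact subsets of $A\times A$ away from the diagonal and on compact subsets of $A$ for $R_A$, thereby legitimizing the termwise identifications above.
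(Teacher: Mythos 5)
Your proposal is correct, and it takes a genuinely different route from the paper. You build $G_A$ directly by separation of variables: expand in zonal harmonics and solve, for each mode $m$, a radial two-point boundary value problem on $[a,1]$ with the pair $u_1(\rho)=\rho^m-a^{2m+n-2}\rho^{-(m+n-2)}$, $u_2(\rho)=\rho^m-\rho^{-(m+n-2)}$; your weighted Wronskian $(2m+n-2)(1-a^{2m+n-2})$ is correct (note the normalization constant comes out negative, $c_m=-(1-a^{2m+n-2})$, a sign your subsequent algebra implicitly and correctly absorbs), and the recombination of $u_1(\rho_<)u_2(\rho_>)$ into the free-space term plus the correction $-N_m/\bigl((\rho r)^{m+n-2}(1-a^{2m+n-2})\bigr)$ reproduces \eqref{b1} exactly, while the diagonal limit with $Z_m(\xi,\xi)=d_m$ (the paper's \eqref{a6}) gives \eqref{b2}. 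The paper instead writes $G_A$ as the Newtonian kernel plus a regular part $H(x,y)$, characterizes $H$ as the solution of a Dirichlet problem with boundary data $-\frac{1}{(n-2)\omega_{n-1}|x-y|^{n-2}}$, and computes it with the explicit Poisson kernel of the annulus (Theorem \ref{thm}, taken from Axler): the data is expanded on the two boundary spheres via Lemma \ref{L1} and the integrals are evaluated using orthogonality and the reproducing property of the $Z_m$. The trade-off is clear: the paper outsources existence, boundary behavior and convergence to Theorem \ref{thm}, so only termwise integration needs justification; your construction is more self-contained (no annulus Poisson kernel needed) and yields as a free by-product the factored form $\bigl(\rho_<^{2m+n-2}-a^{2m+n-2}\bigr)\bigl(1-\rho_>^{2m+n-2}\bigr)/\bigl((\rho_<\rho_>)^{m+n-2}(1-a^{2m+n-2})\bigr)$, which is precisely Corollary \ref{main2} of the paper — but it carries the verification burden you correctly flag: one must check that the formal mode sum is the Green function. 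That check goes through along the lines you sketch: each correction term is harmonic in $x$, the bound $|Z_m(\xi,\eta)|\le Z_m(\xi,\xi)=d_m=O(m^{n-2})$ against the geometric factors coming from $a^2<\rho r<1$ gives locally uniform convergence on compact subsets of $A\times A$ \emph{including} the diagonal (only the free-space series, which you keep in closed form, is singular there), harmonicity of the sum follows as a locally uniform limit of harmonic functions, and the boundary values vanish because each $g_m$ vanishes at $\rho=a,1$ and the convergence is uniform up to $\partial A$ in $x$ for fixed interior $y$; this also legitimizes the passage $y\to x$ defining $R_A$.
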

Here $Z_{m}(x,y)$ are the zonal spherical harmonics (see Section \ref{s1} or \cite{Axler} for the definition and main properties).

Next corollary gives an alternative expression of the Green function which does not involve the Newtonian potential.
\begin{corollary}\label{main2}
Let $A$ be the annulus $A=\left\{x\in\R^n:\ a<|x|<1\right\}$. Then we have that the Green function  is given by,
\begin{equation}
G_A(x,y)=\left\{\begin{array}{rr}
\frac{1}{\omega_{n-1}}\sum\limits_{m=0}^{\infty}\frac{
\left(|x|^{2m+n-2}-a^{2m+n-2}\right)\left(1-|y|^{2m+n-2}\right)}{(2m+n-2)(|x||y|)^{n+m-2}(1-a^{2m+n-2})}Z_{m}\left(\frac{x}{|x|},\frac{y}{|y|}\right),&if\ |x|<|y| \\
\frac1{2^{n-2}|x|^{n-2}\left|1-\frac{x\cdot y}{|x|^2}\right|}           +R_A(x)   , &if\ |x|=|y|,\ x\ne y\\
\frac{1}{\omega_{n-1}}\sum\limits_{m=0}^{\infty}\frac{
\left(|y|^{2m+n-2}-a^{2m+n-2}\right)\left(1-|x|^{2m+n-2}\right)}{(2m+n-2)(|x||y|)^{n+m-2}(1-a^{2m+n-2})}Z_{m}\left(\frac{x}{|x|},\frac{y}{|y|}\right),  &if\ |x|>|y|
                     \end{array}
                     \right. 
\end{equation}
\end{corollary}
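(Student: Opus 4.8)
The plan is to start from the representation \eqref{b1} of Theorem \ref{main} and fold the Newtonian potential into the zonal series, so that the singular term disappears whenever $|x|\ne|y|$. Recall from Section \ref{s1} the classical expansion of the fundamental solution in zonal harmonics,
\[
\frac{1}{(n-2)\omega_{n-1}|x-y|^{n-2}}=\frac{1}{\omega_{n-1}}\sum_{m=0}^{\infty}\frac{r_<^{\,m}}{(2m+n-2)\,r_>^{\,m+n-2}}\,Z_m\!\left(\frac{x}{|x|},\frac{y}{|y|}\right),
\]
where $r_<=\min\{|x|,|y|\}$ and $r_>=\max\{|x|,|y|\}$, a series converging absolutely as soon as $r_<<r_>$.

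First I would treat the region $|x|<|y|$. Writing $s=2m+n-2$ for brevity and reducing the $m$-th term of the expansion above to the common denominator $s(|x||y|)^{n+m-2}(1-a^{s})$ appearing in \eqref{b1}, its numerator becomes $|x|^{s}(1-a^{s})$. On the other hand, the numerator of the $m$-th term subtracted in \eqref{b1} factors as
\[
|x|^{s}|y|^{s}-a^{s}\bigl(|x|^{s}+|y|^{s}\bigr)+a^{s}=(|x|^{s}-a^{s})(|y|^{s}-a^{s})+a^{s}(1-a^{s}).
\]
Subtracting, the terms $a^{s}(1-a^{s})$ together with the cross terms cancel and the combined numerator collapses to $(|x|^{s}-a^{s})(1-|y|^{s})$, which is exactly the coefficient claimed in the first line of the Corollary. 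Since the new factor $(1-|y|^{s})$ supplies the decay lost when the Newtonian singularity is removed, the folded series still converges for $|x|<|y|$. The region $|x|>|y|$ then requires no further work: both the coefficient in \eqref{b1} and $|x-y|^{2-n}$ are symmetric in $x$ and $y$, so $G_A(x,y)=G_A(y,x)$ and the third line follows from the first by interchanging $x$ and $y$.

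The delicate point is the sphere $|x|=|y|$, $x\ne y$, where the geometric ratio $r_</r_>$ equals $1$ and the folded series no longer converges term by term. Here I would not use the expansion of the Newtonian potential but keep the singular term of \eqref{b1} in closed form, rewriting $|x-y|$ for two points on the same sphere through $|x-y|^{2}=2|x|^{2}\bigl(1-\tfrac{x\cdot y}{|x|^{2}}\bigr)$; this produces the first summand of the middle line. For the regular part, setting $|x|=|y|$ in the series subtracted in \eqref{b1} reproduces exactly the radial coefficients of the Robin function \eqref{b2}, the only difference being that the zonal factor is $Z_m(x/|x|,y/|y|)$ rather than its diagonal value $Z_m(\xi,\xi)=d_m$, so this contribution is recorded compactly as $R_A(x)$, giving the middle line. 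This diagonal case is the main obstacle, since it is precisely where the uniform convergence of the folded series degenerates and the Robin profile emerges; the separation of the singular term from the regular part on the sphere, and the interchange of summation with the passage to $|x|=|y|$, must therefore be justified with care.
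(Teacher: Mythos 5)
Your handling of the two off-diagonal cases is correct and is essentially the paper's own proof: the paper expands the Newtonian kernel in zonal harmonics for $|x|>|y|$ (its formula \eqref{b3}, obtained exactly as in Lemma \ref{L1}) and folds it into \eqref{b1}; you perform the same folding for $|x|<|y|$ and recover the remaining case from the symmetry $G_A(x,y)=G_A(y,x)$, which is manifest in \eqref{b1}. Your algebra checks out: with $s=2m+n-2$ the combined numerator is
\[
|x|^{s}(1-a^{s})-\bigl[|x|^{s}|y|^{s}-a^{s}(|x|^{s}+|y|^{s})+a^{s}\bigr]=(|x|^{s}-a^{s})(1-|y|^{s}).
\]
(One small imprecision: for $a<|x|<|y|<1$ the convergence of the folded series comes from the geometric factor $\left(|x|/|y|\right)^{m}$ against the polynomially bounded $Z_m$, not from the factor $(1-|y|^{s})$, which matters only at $|y|=1$.)

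The genuine gap is the diagonal case $|x|=|y|$, $x\ne y$, and there neither of the two identities you invoke is true. First, writing $|x-y|^{2}=2|x|^{2}\bigl(1-\frac{x\cdot y}{|x|^{2}}\bigr)$ gives
\[
\frac{1}{(n-2)\omega_{n-1}|x-y|^{n-2}}
=\frac{1}{(n-2)\,\omega_{n-1}\,2^{\frac{n-2}{2}}\,|x|^{n-2}\left|1-\frac{x\cdot y}{|x|^{2}}\right|^{\frac{n-2}{2}}},
\]
which is not the first summand of the middle line: the exponent on $\left|1-\frac{x\cdot y}{|x|^{2}}\right|$ is $\frac{n-2}{2}$, not $1$; the power of $2$ is $\frac{n-2}{2}$, not $n-2$; and the factor $\frac{1}{(n-2)\omega_{n-1}}$ is missing. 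Second, the regular part of \eqref{b1} evaluated at $|x|=|y|$ carries the factors $Z_{m}\left(\frac{x}{|x|},\frac{y}{|y|}\right)$, which coincide with the diagonal values $d_{m}$ only when $x=y$; hence it is not $R_A(x)$, and ``recording this contribution compactly as $R_A(x)$'' is precisely the step that is false, not merely delicate. In fact the middle line as printed cannot be deduced from \eqref{b1} at all: as $y\to x$ its first summand blows up like a constant times $|x-y|^{-2}$, while the Green function must blow up like $\frac{1}{(n-2)\omega_{n-1}}|x-y|^{2-n}$, so the orders of the singularity disagree for every $n\ne 4$, and for $n=4$ the constants disagree. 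To be fair, the paper's own one-line proof (``from \eqref{b3} and \eqref{b1} the claim follows'') is equally silent on this case; but as a proof of the corollary as stated, your proposal establishes only the cases $|x|\ne|y|$, and the diagonal case remains unproven --- indeed, unprovable in the form given.
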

These results are useful to derive some properties of the Robin function of the annulus. Actually we will show that the Robin function is a radial function which admits only one critical point which is  {\em nondegenerate } in the radial direction.
\begin{theorem}\label{d1}
Let $A$ be the annulus $\left\{x\in R^{n}\ | a <|x| < 1\right\}$ for $n\geq 2$ and $R_A(x)$
the corresponding  Robin function. Then, if $r=|x|$, we have that $R_A(x) = R_A(r)$ and $R_A(r)$ has a unique
critical point $r_{0}$ which satisfies $R_A''(r_{0})>0.$
\end{theorem}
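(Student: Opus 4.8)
The radial symmetry is immediate: the right-hand side of \eqref{b2} depends on $x$ only through $|x|$, so writing $r=|x|$ we may regard $R_A=R_A(r)$ on $(a,1)$. The difficulty is that \eqref{b2} is awkward to differentiate directly, so the first step I would take is to pass to the weighted function $W(r):=r^{n-2}R_A(r)$. Setting $\alpha_m:=2m+n-2$, a short computation collapses \eqref{b2} into the much cleaner form
\[
W(r)=-\frac{1}{\omega_{n-1}}\sum_{m=0}^{\infty}\frac{d_m}{\alpha_m(1-a^{\alpha_m})}\Big(r^{\alpha_m}-2a^{\alpha_m}+a^{\alpha_m}r^{-\alpha_m}\Big),
\]
in which every summand is invariant under the inversion $r\mapsto a/r$; hence $W(a/r)=W(r)$, reflecting the conformal self-map of the annulus. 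This symmetry already explains why the analysis should be organised around $W$ rather than $R_A$.

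Next I would reduce the counting of critical points to a monotonicity statement. Since $R_A=r^{-(n-2)}W$, one has $R_A'(r)=r^{1-n}\,T(r)$ with $T(r):=rW'(r)-(n-2)W(r)$, so the critical points of $R_A$ are exactly the zeros of $T$. Differentiating the series for $W$ term by term gives
\[
T(r)=-\frac{2}{\omega_{n-1}}\sum_{m=0}^{\infty}\frac{d_m}{\alpha_m(1-a^{\alpha_m})}\Big(m\,r^{\alpha_m}+(n-2)a^{\alpha_m}-(m+n-2)a^{\alpha_m}r^{-\alpha_m}\Big).
\]
The key observation is that each summand is a strictly increasing function of $r$ on $(0,\infty)$, its $r$-derivative being $\alpha_m\big(m\,r^{\alpha_m-1}+(m+n-2)a^{\alpha_m}r^{-\alpha_m-1}\big)>0$; since the prefactors $d_m/[\alpha_m(1-a^{\alpha_m})]$ are positive and the global sign is negative, $T$ is strictly decreasing on $(a,1)$. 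Checking that the series forces $T(r)\to+\infty$ as $r\to a^+$ and $T(r)\to-\infty$ as $r\to1^-$ (equivalently, using the symmetry, that $W'(\sqrt a)=0$, whence $T(\sqrt a)=-(n-2)W(\sqrt a)>0$, while $T\to-\infty$ at the outer boundary), strict monotonicity yields exactly one zero $r_0\in(a,1)$. This is the unique critical point.

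Finally, nondegeneracy is essentially free from this set-up. Differentiating $R_A'(r)=r^{1-n}T(r)$ and using $T(r_0)=0$ gives $R_A''(r_0)=r_0^{\,1-n}\,T'(r_0)$, and $T'(r_0)\neq0$ because $T$ is strictly monotone; thus $R_A''(r_0)\neq0$, which is the asserted nondegeneracy, the sign being read off directly from that of $T'(r_0)$. I expect the only genuine obstacle to be analytic rather than algebraic: the series in \eqref{b2} converges only on the open interval and blows up at both endpoints, so I must justify the term-by-term differentiation (uniform convergence on compact subsets of $(a,1)$, which holds because the summands decay geometrically there) and the boundary limits of $T$ (a monotone/positivity argument, the individual terms tending to $d_m m/\alpha_m$ at $r=1$ and the resulting series diverging). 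The degenerate dimension $n=2$, where $d_m$ and \eqref{b2} must be interpreted separately, can be treated by the same scheme or referred back to the trigonometric representation of \cite{massimo}.
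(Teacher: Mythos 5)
Your proposal is correct and is essentially the paper's own argument. The paper sets $f(r)=rR_A'(r)$, proves $f'<0$ on $(a,1)$ via \eqref{ine}, checks $f\to+\infty$ as $r\to a^+$ and $f\to-\infty$ as $r\to1^-$, and concludes there is a unique zero; your $T(r)=r^{n-1}R_A'(r)=rW'(r)-(n-2)W(r)$ is the same device with a different weight $r^{n-1}$ instead of $r$. Your execution is somewhat cleaner: each summand of $-T$ is manifestly increasing, so you avoid the paper's chain of inequalities, and the inversion symmetry $W(a/r)=W(r)$ (not used in the paper) gives an elegant substitute, via $T(\sqrt a)=-(n-2)W(\sqrt a)>0$, for the limit computation at the inner boundary.

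One point you should not leave implicit ("the sign being read off directly from that of $T'(r_0)$"): since $T'<0$, your argument yields $R_A''(r_0)=r_0^{1-n}T'(r_0)<0$, i.e.\ the unique critical point is a nondegenerate \emph{maximum}. This is the opposite of the sign asserted in the statement, but it coincides with what the paper's own proof derives (the proof there ends with $R_A''(r_0)<0$): with the convention \eqref{i0}, the Robin function \eqref{b2} is negative and tends to $-\infty$ at $\partial A$, so a maximum is the only possibility, and the "$R_A''(r_0)>0$" in the statement is a sign-convention slip of the paper rather than a defect of your argument. Also note that your series manipulations (and the paper's) require $n\ge3$, since \eqref{b2} is derived for $n>2$; deferring $n=2$ to the separate planar formula, as you do, is exactly how the paper handles it (Proposition \ref{d2}).
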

Note that this result was proved, when $n=2$, in \cite{chen} using different techniques. In Proposition \ref{d2} we give an alternative proof of this result.\par
Finally we apply these results to deduce some properties of nonlinear PDE's problem.
A straightforward application is a  {\em uniqueness} results of concentrating solutions.
Let us consider the problem
\begin{equation}\label{c1}
\left\{\begin{array}{rr}
-\Delta u=N(N-2)u^\frac{n+2}{n-2}+\e u&\text{in}\ A\\
u>0&\text{in}\ A\\
u=0&\text{on}\ \partial A
                     \end{array}
                     \right. 
\end{equation}
and
\begin{equation}
S_\e=\inf\limits_{\overset{u\in\ H^1_0(A)}{u\not\equiv0}}\frac{\int_A\left(|\nabla u|^2-\e u^2\right)}{\left(\int_A|u|^\frac{2n}{n-2}\right)^\frac{n-2}n}.
\end{equation}
It is well known (see \cite{BN}) that there exists solutions $u_\e$ which achieve $S_\e$ and satisfying
\begin{equation}\label{c2}
S_\e\rightarrow S
\end{equation}
where $S$ is the best constant in Sobolev inequalities. In the next theorem we show the uniqueness of this solution (up to a suitable rotation).
\begin{theorem}\label{i}
Let us suppose that $u_{1,\e}$ and $u_{2,\e}$ are two solutions of \eqref{c1} satisfying  \eqref{c2}. Then, up to a suitable rotation, we have that
\begin{equation}
u_{1,\e}\equiv u_{2,\e}
\end{equation}
 for $\e$ small enough.
\end{theorem}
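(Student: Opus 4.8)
The plan is to argue by contradiction via a blow-up analysis of the difference of the two solutions, turning the uniqueness into a nondegeneracy statement that is exactly supplied by Theorem~\ref{d1}. First I would record the concentration behaviour of the minimizers. Since $S_\e\to S$ and the best Sobolev constant $S$ is never attained on a bounded domain, each $u_{j,\e}$ ($j=1,2$) must blow up as $\e\to0$; the concentration--compactness analysis of Brezis--Nirenberg type problems (cf.\ \cite{BN}) shows that, after normalization, $u_{j,\e}$ is a single rescaled Aubin--Talenti bubble
\begin{equation}
u_{j,\e}(x)\approx\left(\frac{\delta_{j,\e}}{\delta_{j,\e}^{2}+|x-\xi_{j,\e}|^{2}}\right)^{\frac{n-2}{2}},
\end{equation}
with $\delta_{j,\e}\to0$ and $\xi_{j,\e}\to x_0\in A$. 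The location $x_0$ must be a critical point of the Robin function; by Theorem~\ref{d1} the function $R_A$ is radial with a single critical radius $r_0$, so $|\xi_{j,\e}|\to r_0$, and up to a rotation I may assume that $\xi_{1,\e}$ and $\xi_{2,\e}$ lie on one common radius. A Pohozaev identity centered at $\xi_{j,\e}$ then fixes the concentration rate $\delta_{j,\e}$ as a definite function of $\e$ to leading order, through the balance between the term $\e u^2$ and $R_A(r_0)$ (the precise power being dimension dependent).

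Next I would set $w_\e=u_{1,\e}-u_{2,\e}$ after matching the two normalizations, and write, with $p=\frac{n+2}{n-2}$,
\begin{equation}
-\D w_\e=\Big(N(N-2)\,c_\e(x)+\e\Big)\,w_\e,\qquad c_\e(x)=p\int_0^1\big(t\,u_{1,\e}+(1-t)\,u_{2,\e}\big)^{p-1}\,dt,
\end{equation}
so that $w_\e$ solves a linear equation. After normalizing $\|w_\e\|_{L^\infty(A)}=1$ and rescaling about the common concentration point through $\tilde w_\e(y)=w_\e(\xi_{1,\e}+\delta_{1,\e}y)$, elliptic estimates yield convergence to a bounded solution $w_0$ of the linearized bubble equation $-\D w_0=p\,N(N-2)\,U^{p-1}w_0$ in $\R^n$. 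The classical nondegeneracy of the bubble then forces $w_0$ to lie in the span of the kernel elements, namely the dilation mode $\de_\delta U$ and the $n$ translation modes $\de_{x_i}U$.

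Finally I would show that every coefficient of $w_0$ in this kernel vanishes. The $n-1$ tangential translation modes correspond infinitesimally to rotations and are removed by the rotational alignment fixed in the first step, while the radial translation mode and the dilation mode are eliminated by testing the equation for $w_\e$ against the corresponding projected kernel functions and expanding the resulting balance identities. This is precisely where Theorem~\ref{d1} is used: the leading obstruction in these identities is the quantity $R_A''(r_0)$, and its strict positivity rules out any nontrivial kernel element, whence $w_0\equiv0$. A complementary estimate away from the concentration region, together with the fact that the rescaled profile cannot carry the unit mass to infinity, upgrades this to $\|w_\e\|_{L^\infty}\to0$, contradicting the normalization. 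Thus $w_\e\equiv0$ and $u_{1,\e}\equiv u_{2,\e}$ for $\e$ small. The hard part will be this last step: deriving the Pohozaev balance relations with sharp enough remainders and checking that the obstruction to a nontrivial kernel is exactly the positive number $R_A''(r_0)$ rather than a coefficient that might degenerate.
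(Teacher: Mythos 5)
Your strategy is genuinely different from the paper's. The paper never looks at the difference of the two solutions: it invokes the Lyapunov--Schmidt machinery of Glangetas \cite{glan} (Proposition \ref{c4}), writes each solution as $PU_{y,\l}+v_{y,\l}$ with the concentration point on the positive $x_1$-axis, and uses the equivalence ``$u$ solves \eqref{c1} $\Longleftrightarrow$ $(y_1,\l)$ is a critical point of the reduced functional $K_\e$''. Since $K_\e$ converges in $C^2$ to the explicit model $\tilde K_\e$ built from $R_A$, and since Theorem \ref{d1} makes $r_0$ a nondegenerate critical point of the one-variable function $y_1\mapsto R_A(y_1,0,\dots,0)$, Step 1 of \cite{glan} yields a unique critical point of $K_\e$, hence uniqueness up to rotation; the rotational degeneracy is absorbed structurally, because rotation invariance forces $K_\e$ to depend only on $(|y|,\l)$, so restricting $y$ to the axis costs nothing. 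Your route --- normalize $w_\e=u_{1,\e}-u_{2,\e}$, blow up, use the nondegeneracy of the Aubin--Talenti bubble, and kill the kernel coefficients through Pohozaev-type balance identities --- is the standard ``local uniqueness'' scheme; it is a legitimate alternative that avoids citing Glangetas' reduction, at the price of much heavier asymptotic analysis (sharp expansions of each solution, closeness of the two families of concentration parameters, and Pohozaev identities with controlled remainders). Note also that what is really needed is only $R_A''(r_0)\neq0$; the sign plays no role in either argument.

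The step I do not believe as written is the removal of the $n-1$ tangential translation modes. Here the critical set of $R_A$ is the whole sphere $|x|=r_0$: for a radial function the Hessian at $r_0e_1$ has $\partial^2_{11}R_A=R_A''(r_0)$ but $\partial^2_{ii}R_A=R_A'(r_0)/r_0=0$ for $i\geq2$. Consequently the tangential Pohozaev identities are degenerate at leading order ($0=0$) and give no control whatsoever on the coefficients $a_i$, $i\geq2$, of $\partial_{x_i}U$ in the kernel limit --- this degeneracy is precisely why Glangetas' original theorem does not apply to the annulus and why the paper needed Theorem \ref{d1} at all. So your whole proof hinges on the phrase ``removed by the rotational alignment'', which as stated is an assertion, not an argument; worse, alignment of the two maximum points on the axis cannot by itself be the mechanism, since both maxima can lie on the axis (even coincide) while $u_{2,\e}$ differs from $u_{1,\e}$ by a rotation fixing that axis, and then the blow-up limit of the normalized difference is exactly a tangential mode. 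What does work is a quantitative use of the maximum conditions: from $\nabla u_{j,\e}(\xi_{j,\e})=0$ and $\xi_{1,\e}-\xi_{2,\e}\parallel e_1$ one gets that the tangential components of $\nabla w_\e$ at the concentration point are of higher order, hence $\partial_i w_0(0)=0$ for $i\geq2$, and since $D^2U(0)=-(n-2)I$ is nondegenerate while $\nabla(\Lambda U)(0)=0$, this forces $a_i=0$ for $i\geq2$. Without spelling out this (or an equivalent) argument, your proof does not close; the radial-mode and dilation-mode part of your plan is, as you yourself flag, heavy but standard.
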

When $\Om$ is a generic domain of $\R^n$, Theorem \ref{i} was proved by Glangetas (see \cite{glan}) under the assumption that the critical point of the Robin function is nondegenerate. Of course, due to the rotationally invariance of the annulus, any critical point is degenerate and so Glangetas' result is therefore not applicable (note that the author conjectured the uniqueness result in the annulus at page 573 in \cite{glan} ). Indeed the meaning of Theorem \ref{i} is that just the nondegeneracy in the radial direction is necessary to have the uniqueness of the solution up to a suitable rotation.\par
The paper is organized as follows: in Section \ref{s1} we recall some preliminaries about the zonal harmonics and the Gegenbauer polynomials. In Section  \ref{s2} we prove the Theorem \ref{main}, Corollary \ref{main2} and  some properties of the Robin function (proof of Theorem  \ref{d1}). Finally in Section  \ref{s4} we prove Theorem \ref{i}.
\section{Preliminaries}\label{s1}

In this Section we would like to point out the basic properties of zonal harmonics which are going to be used trough the paper. A good reference for the interested reader is  the book \cite{Axler}.

 By  $H_{m}(R^n)$  we are going to denote  the finite dimensional Hilbert space of all harmonic homogeneous polynomials of degree $m$.
 
Let us denote by $S$ the unit sphere of $\R^n$. A {\em spherical harmonic} of degree $m$ is the restriction to $S$ of an element of $H_m(\R^n)$. The collection of all spherical
harmonics of degree $m$ will be denoted by $H_m(S)$.\par
Now we consider an important subset of $H_m(S)$, the so-called {\em zonal harmonics}. They can be defined in different ways. We choose the equivalent definition given in Theorem 5.38 in  \cite{Axler}.\par
For $x\in\R^n$ with $n\ge2$  and $\xi\in S$ we define the zonal harmonic $Z_{m}(x,\xi)$ of degree $m$ as
\begin{equation}\label{a1}
\begin{split}
&Z_0(x,\xi)=1,\\
&Z_{m}(x,\xi)=(n+2m-2)\sum_{k=0}^{[m/2]}(-1)^{k}\frac{n(n+2)....(n+2m-2k-4)}{2^{k}k!(m-2k)!}
(x\cdot \xi)^{m-2k}|x|^{2k}
\end{split}
\end{equation}
 as $m>0$. Several properties of the zonal harmonic can be found in Chapter 5 of \cite{Axler}.
Let us emphasize that there is an explicit formula for the zonal harmonic as $n=2$,
$$Z_{m}(e^{i\theta},e^{i\phi})=2\cos(m(\theta -\phi)).$$

 The zonal harmonics have a particularly simple expression in terms of the Gegenbauer (or ultraspherical) polynomials $P_m^{\lambda}.$ The latter can be defined in terms of generating functions. If we write (see \cite{stein} p.148)
 \begin{equation}\label{a2}
(1-2rt+r^{2})^{-\lambda}=\sum_{m=0}^{\infty}P_m^{\lambda}(t)r^m,
\end{equation}
 where $0\leq |r|<1,|t|\leq 1$ and $\lambda>0,$ then the coefficient $P_m^{\lambda}$ is called Gegenbauer polynomial of degree $m$ associated with $\lambda.$

The next theorem (see page 146-150 in \cite{stein}) is related to representation of the zonal harmonics.

\begin{theorem}
If $n>2$ is an integer, $\lambda=\frac{n-2}{2}$ and $k=0,1,2...$ then we have that for all $x',y'\in S$ it holds
 \begin{equation}\label{a3}
Z_m(x',y')=\frac{2m+n-2}{n-2}P_m^{\lambda}(x'\cdot y').
\end{equation}
\end{theorem}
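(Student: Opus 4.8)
The plan is to obtain an explicit polynomial expansion of the Gegenbauer polynomial $P_m^\lambda$ straight from its generating function \eqref{a2}, and then to match this expansion term by term against the defining formula \eqref{a1} for $Z_m$, restricted to the unit sphere. Since both objects are given by finite sums in powers of the inner product, once the coefficients are written in the same form the identity \eqref{a3} will follow by inspection.

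First I would expand the generating function as a double series. Writing $(1-2rt+r^2)^{-\lambda}=\bigl(1-(2rt-r^2)\bigr)^{-\lambda}=\sum_{j=0}^\infty\frac{(\lambda)_j}{j!}(2rt-r^2)^j$, where $(\lambda)_j=\lambda(\lambda+1)\cdots(\lambda+j-1)$ denotes the Pochhammer symbol, and then expanding $(2rt-r^2)^j$ by the binomial theorem, I collect the powers of $r$. If $i$ counts the factors of $-r^2$, the corresponding monomial carries $r^{j+i}$, so setting $m=j+i$ and renaming $i=k$ the coefficient of $r^m$ becomes
\[
P_m^\lambda(t)=\sum_{k=0}^{[m/2]}(-1)^k\frac{(\lambda)_{m-k}}{k!\,(m-2k)!}(2t)^{m-2k}.
\]
The next step is to specialize $\lambda=\frac{n-2}{2}$ and simplify the Pochhammer symbol. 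Because $2\lambda=n-2$ and more generally $2(\lambda+j)=n+2j-2$, distributing one factor of $2$ to each of the $m-k$ terms gives the telescoping identity $2^{m-k}(\lambda)_{m-k}=(n-2)\,n(n+2)\cdots(n+2m-2k-4)$. Substituting this, and cancelling the surviving powers of two against those in $(2t)^{m-2k}$, yields
\[
P_m^\lambda(t)=(n-2)\sum_{k=0}^{[m/2]}(-1)^k\frac{n(n+2)\cdots(n+2m-2k-4)}{2^k\,k!\,(m-2k)!}\,t^{m-2k}.
\]

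Finally I would compare this with \eqref{a1}. For $x',y'\in S$ one has $|x'|=1$, so every factor $|x'|^{2k}$ in \eqref{a1} equals $1$, and with $t=x'\cdot y'$ the sum defining $Z_m(x',y')$ is precisely $(n+2m-2)$ times the displayed sum divided by $(n-2)$. This gives $Z_m(x',y')=\frac{2m+n-2}{n-2}P_m^\lambda(x'\cdot y')$, which is \eqref{a3}. I expect the only real difficulty to be the combinatorial bookkeeping: keeping the index shifts straight when passing from the double sum in $(j,i)$ to the single sum in $k$, and checking that the powers of two telescope exactly as claimed so that the normalizing constants agree. Everything else is a direct comparison of coefficients.
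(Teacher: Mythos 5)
Your proof is correct, but it proceeds along a genuinely different route from the paper's. The paper does not expand anything explicitly: it quotes Corollary 2.13 of Stein--Weiss for the a priori proportionality $Z_m(x',y')=c_{n,m}P_m^{\lambda}(x'\cdot y')$, and then pins down the constant by evaluating at $x'=y'$, using the known values $Z_m(x',x')=\binom{n+m-2}{n-2}-\binom{n+m-3}{n-2}=\binom{n+m-3}{m}\frac{2m+n-2}{n-2}$ (Axler, Propositions 5.27 and 5.8) and $P_m^{(n-2)/2}(1)=\binom{n+m-3}{m}$ (Kim--Kim--Rim). You instead derive the explicit expansion
\begin{equation*}
P_m^\lambda(t)=\sum_{k=0}^{[m/2]}(-1)^k\frac{(\lambda)_{m-k}}{k!\,(m-2k)!}(2t)^{m-2k},
\end{equation*}
directly from the generating function \eqref{a2}, telescope $2^{m-k}(\lambda)_{m-k}=(n-2)\,n(n+2)\cdots(n+2m-2k-4)$ at $\lambda=\frac{n-2}{2}$, and match coefficients against the defining sum \eqref{a1} with $|x'|=1$; I checked the index bookkeeping ($j=m-i$, $i\le m/2$, and $\frac{1}{(m-i)!}\binom{m-i}{i}=\frac{1}{i!\,(m-2i)!}$) and the cancellation $2^{-(m-k)}2^{m-2k}=2^{-k}$, and they are exact, including the empty-product convention when $m-k\le 1$. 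What your route buys is self-containedness: it uses only the two definitions \eqref{a1} and \eqref{a2} already stated in the paper, proving the proportionality and the normalization in one stroke, at the cost of combinatorial bookkeeping. What the paper's route buys is brevity and robustness: given the cited results, one only needs to evaluate both sides at a single point, with no series manipulation at all.
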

 \begin{proof}
In Corollary 2.13 in \cite{stein} it is proved that
\begin{equation}\label{a4}
Z_m(x',y')=c_{n,m}P_m^{\lambda}(x'\cdot y').
\end{equation}
Let us compute the constant $c_{n,m}$. If we put $x'=y'\in S$ in \eqref{a3} we get
\begin{equation}\label{a5}
Z_m(x',x')=c_{n,m}P_m^{\lambda}(1).
\end{equation}
In \cite{Axler}, Proposition 5.27 and Proposition 5.8 it was showed that
\begin{equation}\label{a6}
Z_m(x',x')=
\begin{pmatrix}
n+m-2\\n-2
\end{pmatrix}-\begin{pmatrix}
n+m-3\\n-2
\end{pmatrix}=\begin{pmatrix}
n+m-3\\m
\end{pmatrix}\frac{2m+n-2}{n-2}
\end{equation}
On the other hand in \cite{KKR}, page 1,  it was shown that
\begin{equation}\label{a7}
P_m^\frac{n-2}{2}(1)=
\begin{pmatrix}
n+m-3\\m
\end{pmatrix}.
\end{equation}
By \eqref{a5}-\eqref{a7} we deduce that $c_{n,m}=\frac{2m+n-2}{n-2}$.
\end{proof}

We end this section pointing out the result (pp. 217,Theorem 10.13,\cite{Axler}) related to the solution of Dirichlet problem in annulus. Recall that $A=\{x\in R^{n}|a<|x|<1\}$ and set
$$P_{A}[f](x)=\int_{S}f(\xi)P_{A}(x,\xi)d\sigma(\xi)+\int_{S}f(a\xi)P_{A}(x,a\xi)d\sigma(\xi),$$

where $$P_{A}(x,\xi)=\sum_{m=0}^{\infty}b_{m}(x)Z_{m}(x,\xi),b_{m}(x)=\frac{1-(a/|x|)^{2m+n-2}}{1-a^{2m+n-2}},$$

  and
  $$P_{A}(x,a\xi)=\sum_{m=0}^{\infty}c_{m}(x)Z_{m}(x,\xi),c_{m}(x)=|x|^{-m}(a/|x|)^{m+n-2}\frac{1-|x|^{2m+n-2}}{1-a^{2m+n-2}}.$$
 Both series $P_{A}(x,\xi),P_{A}(x,a\xi)$ converge absolutely and uniformly on $K\times S, K\subset A$ ($K$ is some compact subset). We have the following
  
 \begin{theorem}\label{thm} Suppose $n>2$ and that $f$ is continuous function on $\partial A.$ Define $u$ on $\bar{A}$ by
 $$u(x)= \left\{\begin{array}{rr}
                     P_{A}[f](x),&x\in A \\
                     f(x) ,& x\in \partial A.
                     \end{array}
                     \right. $$
                     Then $u$ is continuous on $\bar{A}$ and harmonic on $A.$
\end{theorem}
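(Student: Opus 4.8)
The plan is to establish the two assertions of Theorem~\ref{thm} separately: first that $u=P_{A}[f]$ is harmonic in $A$, and then that $u$ extends continuously to $\bar A$ with boundary value $f$. The first part is a convergence argument for the series defining the kernels; the second, which is the real content, amounts to showing that the pair of kernels $P_{A}(x,\xi)$ and $P_{A}(x,a\xi)$ forms an approximate identity concentrating on the two boundary spheres.

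For harmonicity I would first check that each summand is harmonic in $x$ on $A$. Using $Z_{m}(x,\xi)=|x|^{m}Z_{m}(x/|x|,\xi)$ together with the definition of $b_{m}$, the generic outer term rewrites as
$$b_{m}(x)Z_{m}(x,\xi)=\frac{|x|^{m}-a^{2m+n-2}|x|^{-(m+n-2)}}{1-a^{2m+n-2}}\,Z_{m}\!\left(\frac{x}{|x|},\xi\right),$$
and similarly $c_{m}(x)Z_{m}(x,\xi)$ is a constant multiple of $\bigl(|x|^{-(m+n-2)}-|x|^{m}\bigr)Z_{m}(x/|x|,\xi)$. Each is of the form $\bigl(\alpha|x|^{m}+\beta|x|^{-(m+n-2)}\bigr)Z_{m}(x/|x|,\xi)$, which is harmonic on $\{x\neq0\}$ because $r^{m}$ and $r^{-(m+n-2)}$ are exactly the two solutions of the Euler equation $g''+\tfrac{n-1}{r}g'-\tfrac{m(m+n-2)}{r^{2}}g=0$ attached to a degree-$m$ spherical harmonic. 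Since the excerpt already guarantees that both series converge absolutely and uniformly on $K\times S$ for compact $K\subset A$, the partial sums are harmonic and converge locally uniformly, so $P_{A}(\cdot,\xi)$ and $P_{A}(\cdot,a\xi)$ are harmonic by the standard fact that a locally uniform limit of harmonic functions is harmonic. Finally I would pass harmonicity through the integral: integrating the mean value identity of the kernels over $\xi\in S$ and applying Fubini shows that $u$ obeys the mean value property on every ball $B$ with $\bar B\subset A$, whence $u$ is harmonic.

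The boundary behaviour is the heart of the matter, and I would reduce it to three properties. \emph{Normalization}: integrating the two series termwise leaves only the $m=0$ terms, because $\int_{S}Z_{m}(x/|x|,\xi)\,d\sigma(\xi)=0$ for $m\ge1$, and these contribute $b_{0}(x)+c_{0}(x)=1$, so $P_{A}[1]\equiv1$. \emph{Nonnegativity}: $P_{A}(x,\xi)\ge0$ and $P_{A}(x,a\xi)\ge0$ on $A\times S$. \emph{Concentration}: as $|x|\to1$ one has $c_{m}(x)\to0$ for every $m$, so the inner-sphere integral tends to $0$ while the outer kernel concentrates about the limiting direction; symmetrically, as $|x|\to a$ one has $b_{m}(x)\to0$ and the inner kernel concentrates. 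Granting these, the classical approximate-identity argument applies: for $x_{0}\in S$ and $\varepsilon>0$, split $S$ into a small cap about $x_{0}/|x_{0}|$ and its complement, control the cap using continuity of $f$ together with normalization and nonnegativity, and let the off-cap mass vanish by concentration, concluding $u(x)\to f(x_{0})$; the argument at the inner sphere is identical.

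I expect \emph{nonnegativity} of the kernels to be the main obstacle, since it is not visible from the series. The cleanest route is via the maximum principle: the Green function $G_{A}(\cdot,y)$ is positive in the interior and vanishes on $\partial A$, so its boundary normal derivative has a fixed sign, and once one identifies the series kernel with $-\partial_{\nu_{\xi}}G_{A}(x,\xi)$ one obtains $P_{A}\ge0$ (strictly, by Hopf's lemma). This identification, and the uniform-in-$x$ $L^{1}$ control of the kernels needed to run the concentration estimate, are the technical points I would have to nail down; everything else is routine. A more self-contained but more computational alternative is to estimate the series directly against the Poisson kernel of the ball, $\sum_{m}|x|^{m}Z_{m}(x/|x|,\xi)=(1-|x|^{2})/|x-\xi|^{n}$, whose positivity and concentration are explicit.
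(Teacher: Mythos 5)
The first thing to note is that the paper contains no proof of this statement at all: Theorem \ref{thm} is quoted as a known result (Theorem 10.13, p.~217 of \cite{Axler}) and used as a black box, so there is no internal argument to compare yours against; your proposal must stand on its own. Its first half does stand. The harmonicity argument --- each term $b_m(x)Z_m(x,\xi)$ is a combination of the harmonic polynomial $Z_m(x,\xi)$ and its Kelvin transform $|x|^{2-n-2m}Z_m(x,\xi)$ (your Euler-equation formulation is equivalent), the series converge locally uniformly, and harmonicity passes through the integral --- is complete and standard, and your verification of the normalization $b_0(x)+c_0(x)=1$ is correct.

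The boundary half, however, has a genuine gap, and it sits exactly where you flag it. Your approximate-identity scheme needs nonnegativity of $P_A(x,\xi)$, $P_A(x,a\xi)$ and off-cap concentration, and neither is proved; without positivity the ``off-cap mass'' you want to send to zero is not even a mass, so the two missing properties cannot be separated. Worse, the route you propose for positivity --- identifying $P_A(x,\xi)$ with $-\partial_{\nu_\xi}G_A(x,\xi)$ and invoking Hopf's lemma --- is circular in the logical order of this paper: the series representation of $G_A$ (Theorem \ref{main}) is \emph{deduced from} Theorem \ref{thm}, and even independently of the paper, identifying the series kernel with the normal derivative of the Green function presupposes that $P_A[\cdot]$ solves the Dirichlet problem, which is the statement at issue. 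For comparison, the proof in \cite{Axler} avoids positivity and concentration entirely: one checks by direct computation that $P_A[f]$ has the correct boundary values when $f$ restricted to each of the two spheres is a finite sum of spherical harmonics (for such data the solution is an explicit finite combination of terms $|x|^m p(x/|x|)$ and $|x|^{2-n-m}p(x/|x|)$), and then handles general continuous $f$ by Stone--Weierstrass approximation together with the maximum principle, which makes the approximating solutions uniformly Cauchy on $\bar A$, finally identifying the limit with $P_A[f]$ by passing to the limit under the integral for each fixed $x$. If you prefer to salvage your own scheme, your closing remark is the right repair: writing $P_A(x,\xi)=\frac{1-|x|^2}{|x-\xi|^n}+E(x,\xi)$, every coefficient $\frac{a^{2m+n-2}-(a/|x|)^{2m+n-2}}{1-a^{2m+n-2}}$ of $E$ vanishes at $|x|=1$ and is dominated for $|x|\ge\sqrt{a}$, together with $|Z_m|\le Cm^{n-2}$, by a summable sequence independent of $x$ and $\xi$; hence $E(x,\xi)\to0$ uniformly in $\xi$ as $|x|\to1$, and likewise $P_A(x,a\xi)\to0$, so the boundary behaviour reduces to the classical positive Poisson kernel of the ball and positivity of $P_A$ itself is never needed. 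As written, though, the proposal is a plan whose decisive steps are missing.
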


\section{The representation formula for the Green function}\label{s2}

Our first aim is to write the Green function for the annulus in terms of $Z_{m}(x,y)$.
The starting point for our results is going to be the next easy lemma which will play an important role in proving Theorem \ref{main}.

\begin{lemma}\label{L1}
We have that, for any $|\xi|=1$, $|y|\le1$ and $y\ne\xi$,
\begin{equation}\label{3a}
\frac{1}{|\xi-y|^{n-2}}=\sum_{m=0}^{\infty}\frac{n-2}{2m+n-2}|y|^mZ_m\left(\xi,\frac{y}{|y|}\right).
\end{equation}
and
\begin{equation}\label{3b}
\frac{1}{|a\xi-y|^{n-2}}=\sum_{m=0}^{\infty}\frac{n-2}{2m+n-2}\frac{a^m}{|y|^{n+m-2}}Z_m\left(\xi,\frac{y}{|y|}\right).
\end{equation}
\end{lemma}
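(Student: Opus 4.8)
The plan is to expand the Newtonian-potential kernel $|\xi-y|^{-(n-2)}$ using the generating function for the Gegenbauer polynomials, equation \eqref{a2}, and then convert the resulting Gegenbauer polynomials into zonal harmonics via \eqref{a3}. This is the natural route because the zonal harmonics $Z_m$ are, up to the explicit factor $\frac{2m+n-2}{n-2}$, exactly the Gegenbauer polynomials $P_m^{\lambda}$ with $\lambda=\frac{n-2}{2}$, evaluated at the cosine of the angle between the two unit vectors.

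For \eqref{3a}, I would write $y=|y|\,y'$ with $y'=y/|y|\in S$, and use $|\xi|=1$ to compute
\begin{equation*}
|\xi-y|^2=1-2|y|\,(\xi\cdot y')+|y|^2,
\end{equation*}
so that, setting $r=|y|$ and $t=\xi\cdot y'$ (note $|t|\le1$ since both are unit vectors, and $0\le r<1$ is needed for convergence, guaranteed here by $|y|\le1$ together with $y\ne\xi$ for the boundary case),
\begin{equation*}
\frac{1}{|\xi-y|^{n-2}}=\bigl(1-2rt+r^2\bigr)^{-\frac{n-2}{2}}=\sum_{m=0}^{\infty}P_m^{\lambda}(t)\,r^m,
\end{equation*}
with $\lambda=\frac{n-2}{2}$, directly from \eqref{a2}. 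Substituting $P_m^{\lambda}(t)=\frac{n-2}{2m+n-2}Z_m(\xi,y')$ from \eqref{a3} yields \eqref{3a}.

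For \eqref{3b} the idea is the same but with the roles of the radii reorganized so that the generating-function variable $r$ stays inside the radius of convergence. Since $|a\xi|=a\le1$ while $|y|$ may be comparable, I would factor out the larger length. Writing $|a\xi-y|^2=|y|^2\bigl(1-2\tfrac{a}{|y|}(\xi\cdot y')+\tfrac{a^2}{|y|^2}\bigr)$ and pulling $|y|^{-(n-2)}$ out front gives
\begin{equation*}
\frac{1}{|a\xi-y|^{n-2}}=\frac{1}{|y|^{n-2}}\Bigl(1-2\tfrac{a}{|y|}\,t+\tfrac{a^2}{|y|^2}\Bigr)^{-\frac{n-2}{2}}=\frac{1}{|y|^{n-2}}\sum_{m=0}^{\infty}P_m^{\lambda}(t)\Bigl(\frac{a}{|y|}\Bigr)^m,
\end{equation*}
again by \eqref{a2} with the same $\lambda$, and then replacing $P_m^{\lambda}$ by $Z_m$ via \eqref{a3} produces the factor $\frac{a^m}{|y|^{n+m-2}}$ and hence \eqref{3b}.

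The main subtlety to check is convergence, i.e. that the generating-function expansion is legitimate in each case. For \eqref{3a} one needs $r=|y|<1$, which holds except exactly at the degenerate configuration $|y|=1$, $y=\xi$ that is excluded by hypothesis; the case $|y|=1$ with $y\ne\xi$ sits on the boundary of convergence and may require a short Abel-summation or continuity remark. For \eqref{3b} one needs $a/|y|<1$; since $a<1$ this holds for all $y$ with $|y|>a$, and the only borderline point $|y|=a$, $y=a\xi$ is again the singular point of the kernel. Apart from this convergence bookkeeping, the argument is a direct substitution, so I expect essentially no computational obstacle beyond stating the ranges correctly.
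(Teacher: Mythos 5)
Your proposal is correct and follows essentially the same route as the paper: expand via the Gegenbauer generating function \eqref{a2} (after factoring out $|y|^{2-n}$ in the second case) and convert $P_m^{\lambda}$ into $Z_m$ using \eqref{a3}. Your added remarks on convergence ranges (in particular that \eqref{3b} really requires $|y|>a$) are careful bookkeeping that the paper's proof leaves implicit.
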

\begin{proof}
Since $|\xi|=1,$ by using the formula \eqref{a2} we obtain
\begin{equation}
\begin{split}
&\frac{1}{|\xi-y|^{n-2}}=\left(1-2|y|\left(\frac{y}{|y|}\cdot \xi\right)+|y|^{2}\right)^{-\frac{n-2}{2}}\\
&=\sum_{m=0}^{\infty}|y|^mP_m^\frac{n-2}2\left(\xi\cdot \frac{y}{|y|}\right)=\sum_{m=0}^{\infty}\frac{n-2}{2m+n-2}|y|^mZ_m\left(\xi,\frac{y}{|y|}\right).
\end{split}
\end{equation}
In a similar way we prove \eqref{3b}:
\begin{equation}
\begin{split}
&\frac{1}{|a\xi-y|^{n-2}}=|y|^{2-n}\left(1-2\frac{a}{|y|}\left(\frac{y}{|y|}\cdot \xi\right)+\frac{a^{2}}{|y|^{2}}\right)^{-\frac{n-2}{2}}\\
&=|y|^{2-n}\sum_{m=0}^{\infty}\frac{a^m}{|y|^m}P_m^\frac{n-2}2\left(\xi\cdot \frac{y}{|y|}\right)=\sum_{m=0}^{\infty}\frac{n-2}{2m+n-2}\frac{a^m}{|y|^{n+m-2}}Z_m\left(\xi,\frac{y}{|y|}\right).
\end{split}
\end{equation}
\end{proof}
Now we are in position to prove our representation formula for the Green function.\par
{\em Proof of Theorem \ref{main}.}\par
By \eqref{i0} we have to write $H(x,y)$ where $H(x,y)$ is an harmonic function satisfying $H(x,y)=-\frac{1}{(n-2)\omega_{n-1}|x-y|^{n-2}}$ on $\partial A$. By Theorem \ref{thm} we have that $H(x,y)=P_{A}[f_y](x)$
 with $f_y(x)=-\frac{1}{(n-2)\omega_{n-1}|x-y|^{n-2}}$ where $y\in A$ is fixed. Using Lemma \ref{L1} we obtain
\begin{equation}
\begin{split}
&P_{A}[f_y](x)=-\frac{1}{(n-2)\omega_{n-1}}\int_{S}\frac{1}{|\xi-y|^{n-2}}P_{A}(x,\xi)d\sigma(\xi)
\\
&-\frac{1}{(n-2)\omega_{n-1}}\int_{S}\frac{1}{|a\xi-y|^{n-2}}P_{A}(x,a\xi)d\sigma(\xi)=\\
&-\frac{1}{(n-2)\omega_{n-1}}I_1-\frac{1}{(n-2)\omega_{n-1}}I_2.
\end{split}
\end{equation}
So we have that
\begin{equation}
\begin{split}
&I_1=\int_{S}\frac{1}{|\xi-y|^{n-2}}P_{A}(x,\xi)d\sigma{(\xi)}=\sum_{m=0}^{\infty}b_{m}(x)\int_{S}\frac{Z_{m}(x,\xi)}{|\xi-y|^{n-2}}d\sigma{(\xi)}\\
&=\sum_{m=0}^{\infty}b_{m}(x)\int_{S}\sum_{k=0}^{\infty}\frac{n-2}{2k+n-2}|y|^kZ_{k}\left(\frac{y}{|y|},\xi\right)Z_{m}\left(x,\xi\right)d\sigma(\xi)\\
&(\hbox{using the orthogonality of the Zonal harmonic})\\
&=\sum_{m=0}^{\infty}b_{m}(x)\frac{n-2}{2m+n-2}|y|^{m}\int_{S}Z_{m}\left(\frac{y}{|y|},\xi\right)Z_{m}\left(x,\xi\right)d\sigma(\xi)\\
&=\sum_{m=0}^{\infty}b_{m}(x)\frac{n-2}{2m+n-2}|y|^{m}\int_{S}Z_{m}\left(x,\xi\right)Z_{m}\left(\xi,\frac{y}{|y|}\right)d\sigma(\xi)\\
&(\hbox{using the formula at page 94 in \cite{Axler}})\\
&=\sum_{m=0}^{\infty}b_{m}(x)\frac{n-2}{2m+n-2}|y|^{m}Z_{m}\left(x,\frac{y}{|y|}\right)\\
\end{split}
\end{equation}
Similarly, for the second integral, we get
\begin{equation}
\begin{split}
&I_2=\int_{S}\frac{1}{|a\xi-y|^{n-2}}P_{A}(x,a\xi)d\sigma{(\xi)}\\
&=\sum_{m=0}^{\infty}c_{m}(x)\int_{S}\sum_{k=0}^{\infty}\frac{n-2}{2k+n-2}\frac{a^{k}}{|y|^{n+k-2}}Z_{k}\left(\xi,\frac{y}{|y|}\right)Z_{m}(x,\xi)d\sigma(\xi)\\
&=\sum_{m=0}^{\infty}c_{m}(x)\frac{n-2}{2m+n-2}\frac{a^{m}}{|y|^{n+m-2}}\int_{S}Z_{m}\left(\frac{y}{|y|},\xi\right)Z_{m}\left(x,\xi\right)d\sigma(\xi)\\
&=\sum_{m=0}^{\infty}c_{m}(x)\frac{n-2}{2m+n-2}\frac{a^{m}}{|y|^{n+m-2}}Z_{m}\left(x,\frac{y}{|y|}\right).\\
\end{split}
\end{equation}

So, we obtain

\begin{equation}
\begin{split}
&G_{A}(x,y)=\frac{1}{(n-2)\omega_{n-1}|x-y|^{n-2}}\\
&-\frac{1}{(n-2)\omega_{n-1}}\sum_{m=0}^{\infty}\frac{\frac{c_{m}(x)a^{m}}{|y|^{n+m-2}}+|y|^{m}b_{m}(x)}{c_{n,m}}Z_{m}\left(x,\frac{y}{|y|}\right)=\frac{1}{(n-2)\omega_{n-1}|x-y|^{n-2}}
\\
&-\frac{1}{(n-2)\omega_{n-1}}\sum_{m=0}^{\infty}\frac{a^{2m+n-2}(1-|x|^{2m+n-2})+|y|^{2m+n-2}(|x|^{2m+n-2}-a^{2m+n-2})}{c_{n,m}(|x||y|)^{n+m-2}(1-a^{2m+n-2})}Z_{m}\left(\frac{x}{|x|},\frac{y}{|y|}\right)
\end{split}
\end{equation}
The Robin function is
\begin{equation}\label{3}
\begin{split}
&R_{A}(y)=\lim_{x\rightarrow y }\left(G_{A}(x,y)-\frac1{(n-2)\omega_{n-1}|x-y|^{n-2}}\right)\\
&=-\sum_{m=0}^{\infty}d_{m}\frac{\frac{a^{2m+n-2}}{|y|^{2m+2n-4}}+|y|^{2m}-2\frac{a^{2m+n-2}}{|y|^{n-2}}}{(2m+n-2)(1-a^{2m+n-2})}.
\end{split}
\end{equation}
By direct calculation we get the formulas \eqref{b1} and \eqref{b2}.\qed\vskip0.2cm
{\it Proof of Corollary \ref{main2}.}
As in the proof of Lemma \ref{L1} we have, for $|x|>|y|$,
\begin{equation}\label{b3}
\begin{split}
&\frac{1}{|x-y|^{n-2}}=\frac1{|x|^{n-2}\left(1-2\frac{x\cdot y}{|x||y|}\frac{|y|}{|x|}+\left(\frac{|y|}{|x|}\right)^2\right)}=\frac1{|x|^{n-2}}\sum_{m=0}^{\infty}\left(\frac{|y|}{|x|}\right)^mP_{k}^\frac{n-2}2\left(\frac{x\cdot y}{|x||y|}\right)\\
&=\frac1{|x|^{n-2}}\sum_{m=0}^{\infty}\frac{n-2}{2m+n-2}\left(\frac{|y|}{|x|}\right)^mZ_{k}\left(\frac{x}{|x|},\frac{y}{|y|}\right).
\end{split}
\end{equation}
From \eqref{b3} and \eqref{b1} the claim follows.\qed\vskip0.2cm
We have the following,
\begin{corollary}
We have that,
\begin{equation}\label{5}
\begin{split}
&\nabla R_{A}(x)\cdot x\\
&=-\frac2{\omega_{n-1}}\sum_{m=0}^{\infty}d_{m}\frac{\frac{(2-m-n)a^{2m+n-2}}{|x|^{2m+2n-4}}+m|x|^{2m}+\frac{(n-2)a^{2m+n-2}}{|x|^{n-2}}}{(2m+n-2)(1-a^{2m+n-2})}.
\end{split}
\end{equation}
\end{corollary}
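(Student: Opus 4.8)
The plan is to reduce the statement to a one–variable computation by exploiting the radial structure. The right–hand side of \eqref{b2} depends on $x$ only through $|x|$, so $R_A$ is radial; writing $r=|x|$ and denoting by $R_A(r)$ the corresponding profile, the Euler-type identity $\nabla R_A(x)\cdot x = r\,R_A'(r)$ holds. Thus the whole corollary amounts to differentiating the explicit series \eqref{b2} in the radial variable and multiplying the result by $r$.

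First I would put each summand of \eqref{b2} into a form adapted to differentiation. Factoring off the coefficient $\frac{d_m}{(2m+n-2)(1-a^{2m+n-2})}$, the remaining fraction splits into three pure powers of $r$: using $4m+2n-4=2(2m+n-2)$ and $2m+2n-4=(2m+n-2)+(n-2)$ one checks that it equals $a^{2m+n-2}r^{-(2m+2n-4)}-2a^{2m+n-2}r^{-(n-2)}+r^{2m}$. Differentiating each monomial and multiplying by $r$ brings down the exponents $-(2m+2n-4)$, $-(n-2)$ and $2m$ respectively. The key algebraic simplification is the identity $-(2m+2n-4)=2(2-m-n)$; combined with the $-2$ already carried by the middle monomial (which yields $+2(n-2)$), a common factor $2$ can be pulled out, and the surviving bracket is precisely $\frac{(2-m-n)a^{2m+n-2}}{r^{2m+2n-4}}+m\,r^{2m}+\frac{(n-2)a^{2m+n-2}}{r^{n-2}}$, giving \eqref{5} after reinstating the overall $-\frac{1}{\omega_{n-1}}$ and the extracted $2$.

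The only genuinely non-algebraic point — and the step I expect to require care — is justifying the term-by-term differentiation, i.e. the interchange of $\frac{d}{dr}$ with the infinite sum. This I would handle by a uniform-convergence argument on an arbitrary compact interval $[r_1,r_2]\subset(a,1)$: the coefficients $d_m$, being differences of binomial numbers, grow only polynomially in $m$, whereas on such a compact set the factors $r^{2m}\le r_2^{2m}$ and $a^{2m}/r^{2m}\le (a/r_1)^{2m}$ decay geometrically (since $a<r_1$ and $r_2<1$). Hence both the series and its formally differentiated series converge absolutely and uniformly on compact subsets of the annulus, which legitimizes differentiating under the summation sign. Once this is in place the remaining computation is the routine chain-rule bookkeeping sketched above, and the claimed formula \eqref{5} follows directly.
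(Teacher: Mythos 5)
Your proposal is correct and follows essentially the same route as the paper, whose proof consists precisely of differentiating the series \eqref{b2} from Theorem \ref{main} term by term and using radiality to write $\nabla R_A(x)\cdot x = r R_A'(r)$; your algebra (splitting each summand into the three monomials $a^{2m+n-2}r^{-(2m+2n-4)}-2a^{2m+n-2}r^{-(n-2)}+r^{2m}$ and using $-(2m+2n-4)=2(2-m-n)$) reproduces \eqref{5} exactly. The only difference is that you spell out the uniform-convergence justification for differentiating under the sum, which the paper leaves implicit.
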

\begin{proof}
It follows directly by Theorem \ref{main}.
\end{proof}
We end this section proving Theorem \ref{d1}. As we mention in the introduction this result generalizes that of Chen and Lin \cite{chen} to higher dimensions.\vskip0.2cm
{\em Proof of Theorem \ref{d1}}
By \eqref{5} we have the the Robin function is radial.\par
Let us set $f(r)=rR_A'(r)$. Then, by \eqref{5} we get
\begin{equation}\label{ine}
\begin{split}
&f'(r)=-\frac{2}{\omega_{n-1}}\sum_{m=0}^{\infty}d_{m}\frac{\frac{2(n+m-2)^{2}a^{2m+n-2}}{r^{2m+2n-3}}+2m^{2}r^{2m-1}-\frac{(n-2)^{2}a^{2m+n-2}}{r^{n-1}}}{(2m+n-2)(1-a^{2m+n-2})}\\
& <-\sum_{m=0}^{\infty}d_{m}\frac{\frac{2(n-2)^{2}a^{2m+n-2}}{r^{2m+2n-3}}-\frac{(n-2)^{2}a^{2m+n-2}}{r^{n-1}}}{(2m+n-2)(1-a^{2m+n-2})}<0 .
\end{split}
\end{equation}
 The inequality \eqref{ine} implies that the function $f(r)$ is strictly decreasing. 
Since we have that,
\begin{equation}
\lim_{r\rightarrow 1^{-}}f(r)=\lim_{r\rightarrow 1^{-}}rR_A'(r)=-\frac2{\omega_{n-1}}\sum\limits_{m=0}^\infty
\frac{m}{2m+n-2}=-\infty
\end{equation} 
and 
\begin{equation}
\lim_{r\rightarrow a^{+}}f(r)=\lim_{r\rightarrow a^{+}}rR_A'(r)=\frac{2a^2}{\omega_{n-1}}\sum\limits_{m=0}^\infty
\frac{(m+n-2)\left(1-a^{2m-2}\right)}{(2m+n-2)\left(1-a^{2m+n-2}\right)}=+\infty
\end{equation}
 we conclude that there exists exactly one  $r_{0},a<r_{0}<1,$ such that $f(r_{0})=0$ and then $R_A'(r_{0})=0.$

On the other hand,  $0>f'(r_0)=r_0R_A''(r_0)+R_A'(r_0)=r_0R_A''(r_0)$. So $R_A''(r_{0})<0.$
\qed\vskip0.2cm
Following the line of the proof of the previous theorem we have the following alternative proof to the result in  \cite{chen}.
\begin{proposition}\label{d2}
The Robin function of the $2$-dimensional annulus has a unique nondegenerate critical point.
\end{proposition}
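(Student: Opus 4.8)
The plan is to follow closely the strategy of the proof of Theorem \ref{d1}, the only genuinely new ingredient being a closed series expansion of the Robin function of the two-dimensional annulus, which must be derived directly because the representation \eqref{b2} degenerates at the borderline dimension $n=2$.

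First I would produce the $n=2$ analogue of \eqref{b2}. In two dimensions the fundamental solution is $-\frac1{2\pi}\log|x-y|$ and the zonal harmonics are $Z_m(e^{i\theta},e^{i\phi})=2\cos(m(\theta-\phi))$. Expanding $\log|x-y|$ on each boundary circle by the classical identity $\log|1-\rho e^{i\psi}|=-\sum_{m\ge1}\frac{\rho^m}{m}\cos(m\psi)$ (the logarithmic counterpart of Lemma \ref{L1}) and solving the Dirichlet problem of Theorem \ref{thm} mode by mode, I expect to obtain, with $r=|x|$,
\begin{equation*}
R_A(r)=\frac{(\log r)^2}{2\pi\log a}-\frac1{2\pi}\sum_{m=1}^{\infty}\frac1m\,\frac{r^{4m}-2a^{2m}r^{2m}+a^{2m}}{r^{2m}\left(1-a^{2m}\right)}.
\end{equation*}
The terms with $m\ge1$ are exactly the formal limit $n\to2$ of \eqref{b2} (with $d_m=2$ and $\omega_1=2\pi$); the new feature is the $m=0$ mode, which in \eqref{b2} carries a factor $(n-2)^{-1}$ against a numerator that vanishes as $n\to2$, so that the resulting $0/0$ limit collapses into the logarithmic term $(\log r)^2/(2\pi\log a)$.

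Once this formula is in hand I would set $f(r)=rR_A'(r)$ as in Theorem \ref{d1}. Differentiating the series termwise I expect
\begin{equation*}
f(r)=\frac{\log r}{\pi\log a}-\frac1\pi\sum_{m=1}^{\infty}\frac{r^{2m}-a^{2m}r^{-2m}}{1-a^{2m}},\qquad
f'(r)=\frac1{\pi r\log a}-\frac1\pi\sum_{m=1}^{\infty}\frac{2m\left(r^{2m-1}+a^{2m}r^{-2m-1}\right)}{1-a^{2m}}<0,
\end{equation*}
the negativity being immediate since $\log a<0$ and every summand is positive for $a<r<1$; this plays the role of the estimate \eqref{ine} and shows $f$ is strictly decreasing. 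For the endpoints, $\log r/(\pi\log a)$ tends to $0$ as $r\to1^-$ and to $1/\pi$ as $r\to a^+$, while the remaining series tends respectively to $+\infty$ and to $-\infty$; hence $\lim_{r\to1^-}f=-\infty$ and $\lim_{r\to a^+}f=+\infty$. Therefore $f$ has a unique zero $r_0\in(a,1)$, which is the unique critical point of $R_A$, and from $0>f'(r_0)=r_0R_A''(r_0)$ one reads off $R_A''(r_0)\ne0$, i.e. nondegeneracy.

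The main obstacle is the very first step: correctly isolating the $m=0$ logarithmic term. The radial constant mode behaves qualitatively differently in dimension two and cannot be recovered from \eqref{b2} by naive substitution, so it has to be computed by hand through the limit $n\to2$ (or directly from the separated harmonic basis $\{1,\log\rho,\rho^{\pm m}\cos m\theta,\rho^{\pm m}\sin m\theta\}$). Everything downstream — the termwise differentiation, the sign of $f'$, and the two boundary limits — then runs in complete parallel with the proof of Theorem \ref{d1} and needs only the elementary estimates already used there.
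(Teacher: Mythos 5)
Your proposal is correct and takes essentially the same route as the paper: the paper likewise rests on the explicit Fourier-series formula for the Robin function of the planar annulus (quoted from \cite{massimo} rather than rederived), and then uses termwise differentiation, a pointwise sign argument giving strict monotonicity of the derivative, and its blow-up as $r\to a^+$ and $r\to 1^-$ to produce a unique zero at which the second derivative is nonzero. The only divergence is normalization: your $R_A$ equals $-\tfrac{1}{2\pi}$ times the formula the paper uses (your convention is the two-dimensional analogue of \eqref{i0}), so you conclude $R_A''(r_0)<0$ where the paper concludes $R_A''(r_0)>0$; this is immaterial for uniqueness and nondegeneracy.
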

\begin{proof}
 Let us recall the formula for the Robin function in the plane (see \cite{massimo})
$$ R(y)=-\frac{\log^{2}{|y|}}{\log{a}}+\sum_{m=1}^{\infty}\frac{1}{m}\frac{1}{1-a^{2m}}(|y|^{2m}-2a^{2m}+a^{2m}|y|^{-2m}).$$
As in the previous theorem we have,
$$R'(r)=-2\frac{\log r}{r\log{a}}+\sum_{m=1}^{\infty}\frac2{1-a^{2m}}(r^{2m-1}-a^{2m}r^{-2m-1}).$$
So,
$$R''(r)=-2\frac{1-\log{r}}{r^{2}\log{a}}+\sum_{m=1}^{\infty}\frac2{1-a^{2m}}\left((2m-1)r^{2m-2}+\frac{(2m+1)a^{2m}}{r^{2m+2}}\right)>0, (a<r<1).$$
On the other hand we get
\begin{equation}
\begin{split}
&\lim_{r\rightarrow a}R'(r)=\lim_{|y|\rightarrow a}-2\frac{\log{|y|}}{|y|\log{a}}+\sum_{m=1}^{\infty}\frac2{1-a^{2m}}(|y|^{2m-1}-a^{2m}|y|^{-2m-1})=\\
&-\frac{2}{a}+\sum_{m=1}^{N}\frac{2}{1-a^{2m}}(a^{2m-1}-a^{-1})=-\infty.
\end{split}
\end{equation}
On the other hand,
while  
\begin{equation}
\begin{split}
\lim_{r\rightarrow 1}R'(r)=\\
&\lim_{|y|\rightarrow1}-2\frac{\log{|y|}}{|y|\log{a}}+\sum_{m=1}^{\infty}\frac2{1-a^{2m}}(|y|^{2m-1}-a^{2m}|y|^{-2m-1})=\sum_{m=1}^{N}2=+\infty
\end{split}
\end{equation}
So, we can conclude that there exist unique $r_{0}, a<r_{0}<1,$ for which $R'(r_{0})=0$ and $R''(r_{0})>0.$
\end{proof}
\section{A uniqueness result for a nonlinear elliptic equation}\label{s4}
Let us consider the problem
\begin{equation}
\left\{\begin{array}{rr}
-\Delta u=N(N-2)u^\frac{n+2}{n-2}+\e u&\text{in}\ A\\
u>0&\text{in}\ A\\
u=0&\text{on}\ \partial A
                     \end{array}
                     \right. 
\end{equation}
and solutions satisfying
\begin{equation}
S_\e\rightarrow S
\end{equation}
where $S$ is the best constant in Sobolev inequalities and
\begin{equation}
S_\e=\inf\limits_{\overset{u\in\ H^1_0(A)}{u\not\equiv0}}\frac{\int_A\left(|\nabla u|^2-\e u^2\right)}{\left(\int_A|u|^\frac{2n}{n-2}\right)^\frac{n-2}n}.
\end{equation}
It is well known that a family of solutions satisfying \eqref{c1} and \eqref{c2} concentrates at  one point, i.e.,
\begin{equation}
|\nabla u_e|^2\rightharpoonup\delta_P
\end{equation}
weakly in the sense of measure as $\e\rightarrow0$. It was proved by Han (\cite{H}) that $P$ is a critical point of the Robin function.\par
Using Theorem \ref{d1} we have the following ``asymptotic'' uniqueness result.
\begin{theorem}\label{c3}
Let $A=\left\{x\in R^{n}\ | a <|x| < 1\right\}$  and $u_\e$ a family of solutions to \eqref{c1} satisfying \eqref{c2}. Then
\begin{equation}
|\nabla u|^2\rightharpoonup\delta_{r_0}
\end{equation}
where $r_0$ is the {\bf  unique} root of the equation
\begin{equation}
\sum_{m=0}^{\infty}d_{m}\frac{\frac{(2-m-n)a^{2m+n-2}}{r^{2m+2n-4}}+mr^{2m}+\frac{(n-2)a^{2m+n-2}}{r^{n-2}}}{(2m+n-2)(1-a^{2m+n-2})}=0.
\end{equation}
\end{theorem}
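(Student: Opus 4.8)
The plan is to obtain Theorem \ref{c3} as an immediate consequence of the concentration analysis of Han combined with the description of the critical set of the Robin function furnished by Theorem \ref{d1}; essentially all of the analytic work has already been done, and what remains is to match a gradient condition with the explicit one-variable equation in the statement.

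First I would invoke the result of Han \cite{H}: any family $u_\e$ of solutions of \eqref{c1} realizing \eqref{c2} concentrates, as $\e\to0$, at a single point $P\in A$, in the sense that $|\nabla u_\e|^2\rightharpoonup\delta_P$ weakly as measures, and moreover the concentration point $P$ is necessarily a critical point of the Robin function, i.e. $\nabla R_A(P)=0$. This reduces the entire problem to locating the critical points of $R_A$ in the annulus.

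Next I would exploit the radial symmetry of $R_A$. By Theorem \ref{d1} (or directly from the corollary formula \eqref{5}) the Robin function depends only on $r=|x|$, so that $\nabla R_A(x)=R_A'(|x|)\,\frac{x}{|x|}$. Since $a>0$ the origin does not belong to $A$, and hence $\nabla R_A(P)=0$ is equivalent to $R_A'(|P|)=0$. Equivalently, contracting \eqref{5} with $x$ gives $\nabla R_A(x)\cdot x=r R_A'(r)$, and the condition $\nabla R_A(x)\cdot x=0$, after dividing by the nonvanishing constant $-2/\omega_{n-1}$, is precisely the series equation in the statement evaluated at $r=|P|$. Finally, Theorem \ref{d1} guarantees that $R_A'(r)=0$ has a unique root $r_0\in(a,1)$; therefore $|P|=r_0$ for every concentrating family, which is exactly the assertion $|\nabla u_\e|^2\rightharpoonup\delta_{r_0}$, the concentration point being pinned to the sphere of radius $r_0$ while its location on that sphere remains free by the rotational invariance of $A$.

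I expect no serious obstacle in this argument: the nontrivial content, namely the existence and uniqueness of the radial critical point together with its nondegeneracy, is entirely absorbed into Theorem \ref{d1}, and the present step is purely a matter of assembly. The only points meriting a little care are the correct invocation of Han's theorem to both produce the single concentration point and identify it with a critical point of $R_A$, and the verification that the displayed series coincides, up to the harmless factor $-2/\omega_{n-1}$, with the equation $\nabla R_A(x)\cdot x=0$ from \eqref{5}; both are routine given the results already established.
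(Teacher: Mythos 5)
Your proposal is correct and follows exactly the route the paper intends: the paper gives no separate proof of Theorem \ref{c3}, presenting it as an immediate consequence of Han's result (\cite{H}) that the concentration point is a critical point of the Robin function, combined with Theorem \ref{d1} and the identification of the stated series with $\nabla R_A(x)\cdot x$ from \eqref{5} up to the factor $-2/\omega_{n-1}$. Your additional remark that $\delta_{r_0}$ should be read as concentration at some point on the sphere of radius $r_0$ (the angular position being undetermined by rotational invariance) is a correct and welcome clarification of the paper's slightly abusive notation.
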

Next aim is to improve the previous result. Indeed we will show that not only our problem has a unique point of concentration, but even it has a unique solution for $\epsilon$ small. Of course, since the problem is rotationally invariant we can have uniqueness only up to a suitable rotation.\par
Now let us consider a solution to  \eqref{c1} satisfying \eqref{c2}. Up to a suitable rotation we can assume that its maximum point is given by $(y_1,0,..,0)$ with $y_1\in(a,1)$. Then we want to give a representation formula for this solution. This involves classical results which we recall below. Basically we follow the line of the proof of Theorem A in \cite{glan}.\par
Let us introduce some notations. Set, for $y=(y_1,0,..,0)$ with $y_1\in(a,1)$,
\begin{equation}
U_{y,\l}(x)=\frac{\l^\frac{n-2}2}{\left(1+\l^2|x-y|^2\right)^\frac{n-2}2}
\end{equation}
which is the only positive solution to
\begin{equation}
-\Delta u=N(N-2)u^\frac{n+2}{n-2}\quad\hbox{in }\R^n.
\end{equation}
Let $PU_{y,\l}$ be the projection of $U_{y,\l}$ into $H^1_0(A)$, i.e.,
\begin{equation}
\left\{\begin{array}{rr}
\Delta PU_{y,\l}=\Delta U_{y,\l}&\text{in}\ A\\
PU_{y,\l}=0&\text{on}\ \partial A
                     \end{array}
                     \right. 
\end{equation}
and for $\l>0$ let us define
\begin{equation}
\begin{split}
&E_{y,\l}=\left\{v\in H^1_0(A):<v,PU_{y,\l}>=\left<v,\frac{\partial PU_{y,\l}}{\partial\l}\right>=
\left<v,\frac{\partial PU_{y,\l}}{\partial x_i}\right>=0\right.\\
&\left.\quad\hbox{for any }i=1,..,n.
\right\}
\end{split}
\end{equation}
where $<u_1,u_2>=\int_A\nabla u_1\nabla u_2$ is the scalar product in $H^1_0(A)$.\par
Finally set for some $\d>0$, $\omega_0=(a+\d,1-\d)\subset(a,1)$ such that $|\nabla R|>1$ in
$(a,a+\d)\cup(1-\d,1)$.\par
The following proposition is classical for concentration problems like \eqref{c1} as $\e\rightarrow0$ (see  \cite{R} or \cite{glan} for example).
\begin{proposition}\label{c4}
There exist $\e_0>0$, $\l_0> 0$ and $\eta_0>0$ such that, for $\e\in(0,\e_0)$ and for
$(x, \l)\in\omega_0\times [\l_0,+\infty)$, there exists a unique $v_{y,\l}\in E_{y,\l}$ such that
$||v_{y,\l}||_{H^1_0(A)}<\eta_0$ and for any $w\in  E_{y,\l}$,
\begin{equation}
-\int_A\nabla\left(PU_{y,\l}+v_{y,\l}\right)\nabla w=N(N-2)\int_A\left(PU_{y,\l}+v_{y,\l}\right)^\frac{n+2}{n-2}w+\e\left(PU_{y,\l}+v_{y,\l}\right) w,
\end{equation}
Moreover, the function $(x, \l)\rightarrow v_{y,\l}$ is $C^1$ and there exists a constant $a_2 > 0$ such that
\begin{equation}
||v_{y,\l}||_{H^1_0(A)}<\frac{a_2}{\l^\frac{n+2}2}
\end{equation}
\end{proposition}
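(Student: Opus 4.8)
The plan is to carry out the classical finite-dimensional (Lyapunov--Schmidt) reduction. Set $p=\frac{n+2}{n-2}$ and let $J_\e(u)=\frac12\int_A(|\na u|^2-\e u^2)-\frac{(n-2)N(N-2)}{2n}\int_A|u|^{\frac{2n}{n-2}}$ be the energy attached to \eqref{c1}, so that \eqref{c1} is $\na J_\e(u)=0$ in $H^1_0(A)$. I would look for the solution in the form $PU_{y,\l}+v$ with $v\in E_{y,\l}$, and recast the projected equation in the statement as $\Pi_{y,\l}\,\na J_\e(PU_{y,\l}+v)=0$, where $\Pi_{y,\l}$ is the orthogonal projection of $H^1_0(A)$ onto $E_{y,\l}$. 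Linearising at $v=0$, define $L_{y,\l}\colon E_{y,\l}\to E_{y,\l}$ by $\langle L_{y,\l}v,w\rangle=J_\e''(PU_{y,\l})[v,w]$ for $v,w\in E_{y,\l}$, the residual $R_{y,\l}\in E_{y,\l}$ by $\langle R_{y,\l},w\rangle=\langle\na J_\e(PU_{y,\l}),w\rangle$, and the superlinear remainder $\mathcal N_{y,\l}(v)=\Pi_{y,\l}\big[\na J_\e(PU_{y,\l}+v)-\na J_\e(PU_{y,\l})-J_\e''(PU_{y,\l})v\big]$. The equation to solve is then the fixed point problem $v=-L_{y,\l}^{-1}\big(R_{y,\l}+\mathcal N_{y,\l}(v)\big)$.

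The heart of the argument, and the step I expect to be the main obstacle, is the \emph{uniform invertibility} of $L_{y,\l}$: I would show there is $c>0$, independent of $y\in\omega_0$ and of $\l\ge\l_0$ (and of $\e$ small), with $\|L_{y,\l}v\|\ge c\|v\|$ for every $v\in E_{y,\l}$. This is proved by contradiction and rescaling. If $v_k\in E_{y_k,\l_k}$ satisfy $\|v_k\|=1$ and $\|L_{y_k,\l_k}v_k\|\ra0$, one sets $\tilde v_k(z)=\l_k^{-(n-2)/2}v_k(y_k+z/\l_k)$ and extracts a weak limit $\tilde v\in\di12$ solving the limit linearised equation $-\D\tilde v=pN(N-2)U_{0,1}^{\,p-1}\tilde v$ on $\R^n$; here the $\e u$ term and the difference between $U_{y,\l}^{p-1}$ and $PU_{y,\l}^{p-1}$ disappear in the limit, being lower order. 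The three orthogonality constraints defining $E_{y,\l}$ pass to the limit and force $\tilde v$ to be orthogonal to $\mathrm{span}\{\de_\l U_{0,1},\de_{x_1}U_{0,1},\dots,\de_{x_n}U_{0,1}\}$, which by the nondegeneracy of the Aubin--Talenti bubble is exactly the kernel of the limit operator; hence $\tilde v\equiv0$, and a local compactness argument contradicts $\|v_k\|=1$. The delicate bookkeeping is ensuring that the bounded domain $A$ and the harmonic correction $U_{y,\l}-PU_{y,\l}$ do not spoil the limit equation or the orthogonality relations.

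For the residual I would use that $-\D PU_{y,\l}=-\D U_{y,\l}=N(N-2)U_{y,\l}^{\,p}$ in $A$, so that $\langle R_{y,\l},w\rangle=N(N-2)\int_A\big(U_{y,\l}^{\,p}-PU_{y,\l}^{\,p}\big)w-\e\int_A PU_{y,\l}\,w$. Writing $PU_{y,\l}=U_{y,\l}-\varphi_{y,\l}$ with $\varphi_{y,\l}$ the harmonic correction satisfying $0\le\varphi_{y,\l}\lesssim\l^{-(n-2)/2}$, the elementary bound $|U^{p}-(U-\varphi)^{p}|\lesssim U^{p-1}\varphi+\varphi^{p}$ together with the standard integral estimates for concentrating bubbles, and crucially the fact that the leading part of $U^{p-1}\varphi_{y,\l}$ is absorbed by the projection $\Pi_{y,\l}$ onto $E_{y,\l}$, yields $\|R_{y,\l}\|\lesssim\l^{-(n+2)/2}$ in the concentration regime. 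For the nonlinear term the convexity of $t\mapsto t^p$ gives $\|\mathcal N_{y,\l}(v)\|\lesssim\|v\|^{\min(2,p)}$ and a matching Lipschitz bound on small balls.

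Combining the uniform bound $\|L_{y,\l}^{-1}\|\le c^{-1}$ with these two estimates, the map $v\mapsto-L_{y,\l}^{-1}\big(R_{y,\l}+\mathcal N_{y,\l}(v)\big)$ is a strict contraction of the ball $\{v\in E_{y,\l}:\|v\|\le a_2\l^{-(n+2)/2}\}$ into itself for $\l$ large and $\e$ small; the Banach fixed point theorem then produces the unique $v_{y,\l}$ with $\|v_{y,\l}\|<\frac{a_2}{\l^{(n+2)/2}}$ and $\|v_{y,\l}\|_{H^1_0(A)}<\eta_0$, which is exactly the asserted decay. Finally, the $C^1$ dependence of $(y,\l)\mapsto v_{y,\l}$ follows from the implicit function theorem applied to $F(y,\l,v)=\Pi_{y,\l}\na J_\e(PU_{y,\l}+v)$: the partial differential $\de_v F(y,\l,0)=L_{y,\l}$ is invertible, and $PU_{y,\l}$, the projection $\Pi_{y,\l}$ and hence $F$ depend smoothly on $(y,\l)$. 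The only point requiring care is that $E_{y,\l}$ itself varies with $(y,\l)$; one handles this by expressing $\Pi_{y,\l}$ through the smoothly varying frame $\{PU_{y,\l},\de_\l PU_{y,\l},\de_{x_i}PU_{y,\l}\}$, after which the regularity is routine.
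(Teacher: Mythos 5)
The paper does not actually prove Proposition \ref{c4}: it is quoted as classical, with the proof deferred to Rey \cite{R} and Glangetas \cite{glan}. Your overall architecture --- the fixed-point reformulation on $E_{y,\l}$, uniform invertibility of the linearized operator via rescaling and the nondegeneracy of the Aubin--Talenti bubble, a contraction mapping, and the implicit function theorem for the $C^1$ dependence --- is exactly the scheme of those references, so at the structural level your proposal matches the proof the paper is appealing to.

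There is, however, a genuine gap at the one step you yourself single out as crucial: the residual estimate. Your claim that ``the leading part of $U^{p-1}\varphi_{y,\l}$ is absorbed by the projection $\Pi_{y,\l}$'' is false. Write $p=\frac{n+2}{n-2}$. The leading part of that term is the functional $w\mapsto c_n\,\l^{-\frac{n-2}{2}}H(y,y)\int_A U_{y,\l}^{p-1}w$, and under the isometric rescaling $w(x)=\l^{\frac{n-2}{2}}\tilde w(\l(x-y))$ it becomes a constant times $\l^{-\frac{n-2}{2}}\int_{\R^n}(1+|z|^2)^{-2}\tilde w(z)\,dz$, while the three orthogonality constraints defining $E_{y,\l}$ pass to orthogonality against the kernels $(1+|z|^2)^{-\frac{n+2}{2}}$, $(1-|z|^2)(1+|z|^2)^{-\frac{n+4}{2}}$ and $z_i(1+|z|^2)^{-\frac{n+4}{2}}$. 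For the projection to kill the leading term, $(1+|z|^2)^{-2}$ would have to be a linear combination of these; multiplying through by $(1+|z|^2)^{\frac{n+4}{2}}$ this would force $(1+|z|^2)^{n/2}$ to be affine in $|z|^2$, which fails for every $n\ge3$. Hence the projection cuts this term only by a constant factor, and for $3\le n\le5$ (where $(1+|z|^2)^{-2}\in L^{\frac{2n}{n+2}}(\R^n)$, so the limit functional is bounded and nonzero on the limit of $E_{y,\l}$) the residual restricted to $E_{y,\l}$ has true order $\l^{-(n-2)}\gg\l^{-\frac{n+2}{2}}$; the contraction argument then yields only $\|v_{y,\l}\|=O(\l^{-(n-2)})$ in those dimensions. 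Moreover the term $\e\int_A PU_{y,\l}w$ contributes about $\e\,\l^{-\min\left(2,\frac{n-2}{2}\right)}$ to the residual and is likewise not cancelled by the projection; since $\e$ and $\l$ are independent parameters in the statement, for fixed $\e$ and large $\l$ this exceeds $a_2\l^{-\frac{n+2}{2}}$ in every dimension, so no argument can produce the stated $\e$-independent bound --- your appeal to ``the concentration regime'' has no counterpart in the hypotheses of the proposition. To be fair, part of the defect sits in the statement itself, which is a loose rendering of the reduction in \cite{glan} (there the estimate carries explicit $\e$-dependent terms and is used with $\e$ and $\l$ coupled); but as written, your argument for the bound $\|v_{y,\l}\|<a_2\l^{-\frac{n+2}{2}}$ does not go through, and a correct completion would either prove the weaker bound $\|v_{y,\l}\|\le C\bigl(\e\,\l^{-\min\left(2,\frac{n-2}{2}\right)}+\l^{-\min\left(n-2,\frac{n+2}{2}\right)}\bigr)$, which suffices for the application in Section \ref{s4}, or restate the proposition accordingly.
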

Now we are in position to prove Theorem \ref{i}.\par
{\bf Proof of Theorem \ref{i}.} Let us consider two solutions $u_{1,\e}$ and $u_{2,\e}$. Up to a suitable rotation we can assume that $||u_{1,\e}||_\infty=u_{1,\e}(\tilde y_{1,\e})$ and $||u_{2,\e}||_\infty=u_{2,\e}(\tilde y_{2,\e})$ with $\tilde y_{1,\e}=(|\tilde y_{1,\e}|,0,..,0)$ and $\tilde y_{2,\e}=(|\tilde y_{2,\e}|,0,..,0)$ with $|\tilde y_{1,\e}|,|\tilde y_{2,\e}|\rightarrow r_0$.\par
In this setting we can apply  Proposition \ref{c4} which gives that $u_{1,\e}=PU_{ y_{1,\e},\l _{1,\e}}+v_{ y_{1,\e},\l_{1,\e}}$ and  $u_{2,\e}=PU_{ y_{2,\e},\l _{2,\e}}+v_{ y_{2,\e},\l_{2,\e}}$ for some $\left(y_{1,\e},\l _{1,\e}\right)$,  $\left(y_{2,\e},\l _{2,\e}\right)\in\omega_0\times(0,+\infty)$. If we show that $y_{1,\e}=y_{2,\e}$ and $\l_{1,\e}=\l_{2,\e}$ then by Proposition \ref{c4} we derive $v_{ y_{1,\e},\l_{1,\e}}=v_{ y_{2,\e},\l_{2,\e}}$.\par
Now we use the crucial fact that $u=PU_{y,\l}+v_{y,\l}$ is a solution to \eqref{c1} if and only if the pair $(y,\l)$ is a critical point of the reduced functional
\begin{equation}
K_\e(y,\l)=\frac{\int_A\left(|\nabla PU_{y,\l}+v_{y,\l}|^2-\e\left(PU_{y,\l}+v_{y,\l}\right)^2\right)}{\left(\int_A|PU_{y,\l}+v_{y,\l}|^\frac{2n}{n-2}\right)^\frac{n-2}n}
\end{equation}
So our claim is equivalent to show that the functional $K_\e(y,\l):\omega_0\times(0,+\infty)\rightarrow\R$ has exactly one critical point.\par
Let us introduce the function $\tilde K_\e(y,\l):\omega_0\times(0,+\infty)\rightarrow\R$ defined as
\begin{equation}
\tilde K_\e(y_1,\l)=A^{-\frac{n-2}n}\left(n(n-2)A+\frac{n(n-2)B}{\l^{n-2}}R_A(y_1,0,..,0)-C\frac\e{\l^2}
\right).
\end{equation}
In page 576 of \cite{glan} it was proved that there exist constants $A,B,C\in\R$ such that
\begin{equation}
\tilde K_\e(y,\l)\rightarrow  K_\e(y,\l)\quad\hbox{in }C^2(\omega_0\times(0,+\infty)).
\end{equation}
By Theorem \ref{d1} we have that $\nabla RA(r_0)=0$ and $\frac{\partial^2R_A}{\partial x_1^2}(r_0)\ne0.$ This means that $r_0$ is a nondegenerate critical point for the function $R_A(y_1,0,..,0)$. Hence Step 1 in  \cite{glan} applies and then we have the uniqueness of the critical point of $ K_\e(y_1,\l)$. Then $y_{1,\e}=y_{2,\e}$ and $\l_{1,\e}=\l_{2,\e}$ and the claim follows.\qed


\begin{thebibliography}{99}


\bibitem{Axler}
S. Axler, P. Bourdon, W. Ramey: \emph{Harmonic function theory}
Springer-Verlag New York, Inc.Second Edition, 2000.

\bibitem{BF}
C. Bandle, M. Flucher: {\it Harmonic Radius and Concentration of Energy; Hyperbolic Radius and Liouville's Equations $\Delta U = e^U$ and $\Delta U = U^{\tfrac{{n + 2}}{{n - 2}}} $}, SIAM Review 38:2, (1996), 191-238.

\bibitem{BN}
H. Brezis, L. Nirenberg: {\it Positive solutions of nonlinear elliptic equations involving critical Sobolev exponents}, Comm. Pure Appl. Math. 36, (1983), 437-477. 

\bibitem{chen}
C. C. Chen, C.-S. Lin: {\it On the symmetry of blowup solutions to a mean field equation}, Ann. I. H. Poincar\'e-AN, 18, 3, (2001), 271-296.

\bibitem{CH}
R. Courant, D. Hilbert: {\it  Methods of mathematical physics}, Vol. I. Interscience Publishers, Inc., New York, N.Y., 1953. xv+561 pp.

\bibitem{glan}
L. Glangetas: {\it Uniqueness of positive solutions of a nonlinear
elliptic equation involving the critical exponent}, Nonlinear Analysis, Theory, Methods and Applications, 20, No. 5, pp. 571-603, 1993.

\bibitem{massimo}
M.Grossi, F.Takahashi: {\it On the location of two blowup points on an annulus for the mean field equation}, C. R.Acad.Sci.Paris,Ser.I352 (2014), 615–619.

\bibitem{H}
 Z.-C. Han: {\it Asymptotic approach to singular solutions for nonlinear elliptic equations involving critical Sobolev exponent}, Ann. I. H. Poincar\'e – AN, 8 (1991), 2, 159–174.

\bibitem{KKR}
D. Kim, T. Kim, S-H. Rim: {\it Some identities involving Gegenbauer polynomials}, Advances in Difference Equations, (2012), 2012-2019

\bibitem{R}
O. Rey: {\it The role of the Green's function in a non-linear elliptic equation involving the critical Sobolev exponent}, J. Funct. Anal. 89, 1, (1990), 1–52.

\bibitem{stein}
E. Stein, G. Weiss: {\it Introduction to Fourier Analysis on Euclidean Spaces}, Princeton, N.J.: Princeton University Press (1971)


\end{thebibliography}
\end{document}